\begin{document}

\title[On the Effective Nonvanishing of Varieties of Kodaira Dimension Zero]{On the Effective Nonvanishing of Varieties of Kodaira Dimension Zero}

\subjclass[2020]{14E30}

\begin{abstract}
Given a smooth projective variety $X$ of Kodaira dimension zero, we show that there exists a constant $m$ depending on two invariants of the general ﬁber of the Albanese map, such that $|mK_X|\neq\emptyset$.
\end{abstract}

\author{Yiming Zhu}
\address{Department of Mathematics, Southern University of Science and Technology, 1088 Xueyuan Rd, Shenzhen 518055, China} \email{zym18119675797@gmail.com}

\maketitle

\setcounter{tocdepth}{1}
\tableofcontents

We work over $\Cc$.

\section{Introduction} 
Let $X$ be a smooth projective variety of Kodaira dimension zero and $f:X\to Y$ its Albanese map. A fundamental result about $f$ due to Kawamata \cite[Theorem 1]{kawamata1981characterization} is that $f$ is an algebraic fiber space. We denote by $F$ the general fiber of $f$. A crucial fact about $F$, proved by Cao-Păun \cite{CapPaun2017Kodairadimension}, is that $F$ has Kodaira dimension zero. Let $b:=\min\{k\in\Nn\mid|kK_F|\neq\emptyset\}$, and $\Tilde{F}$ a resolution of the cover branched over the unique divisor in $|bK_F|$. We set $\beta_{\tilde{F}}:=\dim H^{\dim {\tilde{F}}}_{prim}({\tilde{F}},\Cc)$ as the primitive middle Betti number of $\tilde{F}$, and $N(\beta_{\tilde{F}}):=lcm\{k\in\Nn\mid\varphi(k)\leq\beta_{\tilde{F}}\}$, where $\varphi$ is Euler's totient function. The following is our main result.

\begin{theorem}\label{thm:main 1}
The sheaf $f_*\omega^{N(\beta_{\Tilde{F}})b}_X\cong O_Y$, and the linear system $|N(\beta_{\Tilde{F}})bK_X|$ is nonempty.
\end{theorem}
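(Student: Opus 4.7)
The plan is to reduce the theorem to showing that the line bundle $\mathcal{L}:=f_*\omega_X^b$ on $Y$ is a torsion element of $\mathrm{Pic}^0(Y)$ whose order divides $N(\beta_{\tilde{F}})$. Since $\kappa(F)=0$ forces $h^0(F,mbK_F)=1$ for every $m\geq 1$, the rank-one reflexive sheaf $\mathcal{L}$ is a line bundle on the smooth $Y$, and the multiplication maps identify $\mathcal{L}^{\otimes m}$ with the reflexive hull of $f_*\omega_X^{mb}$. Hence $\mathcal{L}^{\otimes N(\beta_{\tilde{F}})}\cong \mathcal{O}_Y$ yields both $f_*\omega_X^{N(\beta_{\tilde{F}})b}\cong \mathcal{O}_Y$ and, by taking global sections, $|N(\beta_{\tilde{F}})bK_X|\neq \emptyset$.

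To show $\mathcal{L}\in\mathrm{Pic}^0(Y)$, I would combine Viehweg's weak positivity of $f_*\omega_{X/Y}^b=\mathcal{L}$ (using $K_Y=0$) with the hypothesis $\kappa(X)=0$: any positive-degree component of $\mathcal{L}$ would, after an appropriate étale pullback, yield too many pluricanonical sections on $X$. Alternatively, generic vanishing on the abelian variety $Y$ applies directly to numerically positive summands.

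The order bound is the heart of the argument. Over a Zariski-open $Y^\circ\subset Y$, the unique divisors $D_y\in|bK_{F_y}|$ assemble into a relative branch divisor, and a simultaneous resolution of the associated $b$-fold cyclic covers produces a smooth family $\tilde{f}:\tilde{\mathcal{X}}\to Y^\circ$ with fiber $\tilde{F}$. The local system $R^{\dim\tilde{F}}\tilde{f}_*\mathbb{Q}_{\mathrm{prim}}$ underlies a polarizable $\mathbb{Q}$-VHS of rank $\beta_{\tilde{F}}$ carrying a $\mathbb{Z}/b$-action from the deck transformations. Through the Hodge filtration, a specific isotypic summand recovers $\mathcal{L}$ up to an element of $\mathrm{Pic}^0(Y)[b]$. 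Since $\mathcal{L}$ is numerically trivial, Deligne's semisimplicity forces the monodromy on this summand to be finite; and because the minimal polynomial over $\mathbb{Q}$ of a primitive $k$-th root of unity is the cyclotomic polynomial $\Phi_k$ of degree $\varphi(k)$, every monodromy eigenvalue has order $k$ with $\varphi(k)\leq \beta_{\tilde{F}}$, whence $k\mid N(\beta_{\tilde{F}})$. This gives $\mathrm{ord}(\mathcal{L})\mid N(\beta_{\tilde{F}})$, closing the argument.

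The main obstacle will be the precise Hodge-theoretic identification: realizing $\mathcal{L}$ as an isotypic piece of a Hodge bundle of $\tilde{f}$, and then promoting the monodromy-level conclusion on $Y^\circ$ to a global statement on $Y$ across the discriminant locus of $f$. This requires the cyclic-cover computations in the spirit of Esnault--Viehweg, control of Deligne's canonical extension compatible with the $\mathbb{Z}/b$-decomposition, and a reflexivity argument to propagate the isomorphism $\mathcal{L}^{\otimes N(\beta_{\tilde{F}})}\cong \mathcal{O}_{Y^\circ}$ to all of $Y$. Once this identification is in place, the reduction to the elementary inequality $\varphi(k)\leq \beta_{\tilde{F}}$ is immediate.
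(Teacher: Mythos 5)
Your overall skeleton coincides with the paper's: numerical triviality of $f_*\omega_X^{mb}$ (which the paper simply quotes from Hacon--Popa--Schnell, Corollary 4.3, rather than re-deriving via weak positivity or generic vanishing), the identification $f_*\omega_X^{mb}\cong(f_*\omega_X^{b})^{\otimes m}$, and the order bound via the Hodge theory of the $b$-fold cyclic cover family, Deligne's finiteness of the rank-one monodromy character and the cyclotomic bound $\varphi(k)\le\beta_{\tilde F}$. Two of your intermediate claims are stated in the wrong logical order or with an unjustified fudge: the assertion that multiplication identifies $\mathcal{L}^{\otimes m}$ with the reflexive hull of $f_*\omega_X^{mb}$ is false in general (think of elliptic fibrations with multiple fibers) and only follows \emph{after} numerical triviality of both sheaves is known, which is exactly how the paper's Lemma argues; and ``recovers $\mathcal{L}$ up to an element of $\mathrm{Pic}^0(Y)[b]$'' silently needs $b\mid N(\beta_{\tilde F})$, i.e.\ $\varphi(b)\le\beta_{\tilde F}$, which is true (the deck action on $H^{d,0}(\tilde F)\subset H^d_{prim}(\tilde F)$ has a primitive $b$-th root of unity as eigenvalue and is defined over $\Qq$) but must be said. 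These are repairable.

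The genuine gap is the step you yourself flag as the main obstacle: passing from $\mathcal{L}^{\otimes N}|_{Y^\circ}\cong O_{Y^\circ}$ to $\mathcal{L}^{\otimes N}\cong O_Y$. A ``reflexivity argument'' cannot do this, because the discriminant of $f$ (and of your simultaneous resolution of the relative cyclic covers) is a divisor, not a codimension-two set: a nontrivial $P\in\mathrm{Pic}^0(Y)$ can restrict trivially to the complement of a divisor (e.g.\ $O_Y(D_1-D_2)$ with $D_1,D_2$ algebraically but not linearly equivalent), and the flat structure $\mathcal{L}$ carries as an element of $\mathrm{Pic}^0(Y)$ need not agree with the Hodge-theoretic flat structure you produce on $Y^\circ$, so finiteness of the monodromy character on $Y^\circ$ alone does not bound the order of $\mathcal{L}$ in $\mathrm{Pic}^0(Y)$. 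What is actually needed, and what the paper supplies, is boundary control: cutting to a curve; a Galois semistable reduction so that local monodromies are unipotent; the degree-zero condition forcing these local monodromies to be trivial (Kawamata); Schmid's norm estimate so that the flat section of $(g_*\omega_{W/Y})^N$ over the smooth locus extends to a section of the canonical extension; Kawamata's Lemma 1 identifying the canonical extension with $g'_*\omega_{W'/Y'}$; and Galois descent back to $Y$. On the algebraic side this is mirrored by ruling out corrections along boundary divisors, which the paper does in its Lemma by showing $t_P=1$ for every prime divisor $P$ using pseudoeffectivity of $L^{ss}_{X/Y}$ and BDPP. Your proposal gestures at ``control of Deligne's canonical extension'' but contains none of these ingredients, and without them the argument does not close.
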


\begin{proof}
It follows from Theorem \ref{HPS}, Lemma \ref{lemma}, and Theorem \ref{thm:main 2}.    
\end{proof}

The following result due to Hacon-Popa-Schnell is crucial to our proof.

\begin{theorem}\cite[Corollary 4.3]{HaconPopaSchnell}\label{HPS}
The sheaf $f_*\omega^{mb}_X$ is a numerically trivial line bundle for all $m\geq0$.
\end{theorem}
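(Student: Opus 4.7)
The plan is to pin down the generic rank of $f_*\omega_X^{mb}$, invoke the Chen-Jiang/Hacon-Popa-Schnell structure theorem for pushforwards of pluricanonical sheaves under a morphism to an abelian variety, and then use $\kappa(X)=0$ to kill any positive-dimensional factor that appears in the decomposition. The case $m=0$ follows from $f_*\mathcal{O}_X=\mathcal{O}_Y$, which holds because $f$ is an algebraic fiber space, so only $m\geq 1$ requires work.

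By Cao-Păun, $\kappa(F)=0$, hence $h^0(F,kK_F)\leq 1$ for every $k$, and the minimality of $b$ gives $h^0(F,bK_F)=1$; multiplying powers of the unique section forces $h^0(F,mbK_F)=1$ for every $m\geq 1$. Thus $f_*\omega_X^{mb}$ is a torsion-free sheaf of generic rank one on the abelian variety $Y$. The Hacon-Popa-Schnell structure theorem for pushforwards along morphisms to abelian varieties then yields a decomposition
\[
f_*\omega_X^{mb}\cong\bigoplus_i\pi_i^*\mathcal{F}_i\otimes P_i,
\]
where each $\pi_i:Y\to A_i$ is a surjection of abelian varieties with connected fibers, each $\mathcal{F}_i$ is an M-regular coherent sheaf on $A_i$, and each $P_i\in\mathrm{Pic}^0(Y)$ is torsion; in particular every summand is locally free.

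The generic rank-one constraint forces a single summand $\pi^*\mathcal{F}\otimes P$ with $\mathrm{rk}\,\mathcal{F}=1$, and an M-regular line bundle on an abelian variety is ample. If $\dim A>0$, then, after replacing $N$ by a multiple that kills the torsion $P$ and using $\pi_*\mathcal{O}_Y=\mathcal{O}_A$, we would have $h^0\!\bigl(Y,(\pi^*\mathcal{F}\otimes P)^{\otimes N}\bigr)\sim N^{\dim A}$. Since the natural multiplication map $(f_*\omega_X^{mb})^{\otimes N}\to f_*\omega_X^{Nmb}$ is generically an isomorphism of rank-one torsion-free sheaves, hence injective, this would force $h^0(X,NmbK_X)$ to grow polynomially, contradicting $\kappa(X)=0$. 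Therefore $\dim A=0$, so $f_*\omega_X^{mb}$ is itself a torsion element of $\mathrm{Pic}^0(Y)$, and in particular a numerically trivial line bundle. The main obstacle is the Hacon-Popa-Schnell decomposition itself, the deep input combining generic vanishing on abelian varieties with Viehweg-type weak positivity; once it is in hand, the rank-one and Kodaira-dimension-zero hypotheses combine cleanly to eliminate any positive-dimensional quotient and collapse the decomposition to a single numerically trivial line bundle.
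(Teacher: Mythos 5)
First, a structural point: the paper does not prove this statement at all --- it is imported verbatim as \cite[Corollary 4.3]{HaconPopaSchnell}, so there is no internal proof to match. What you have written is essentially a reconstruction of how Hacon--Popa--Schnell themselves derive that corollary from their Chen--Jiang-type decomposition of $f_*\omega_X^{\otimes k}$ for morphisms to abelian varieties, so the strategy (rank one by Cao--P\u{a}un plus minimality of $b$, then the decomposition, then $\kappa(X)=0$ to kill any positive-dimensional $A_i$) is the right one and is the genuinely substantive route to the result.

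There is, however, a concrete gap in the contradiction step. The decomposition gives a single summand $\pi^*\mathcal{F}\otimes P$ with $\mathcal{F}$ an M-regular, torsion-free, rank-one sheaf on $A$; your parenthetical ``in particular every summand is locally free'' does not follow --- M-regular sheaves, even torsion-free rank-one ones, need not be locally free (think $I_Z\otimes L$ with $Z$ of codimension two), and the local freeness of $f_*\omega_X^{mb}$ is exactly the kind of conclusion one only gets \emph{after} knowing $c_1=0$ (this is how \cite[4.6]{HaconPopaSchnell} is used in Lemma \ref{lemma}). Once $\mathcal{F}$ is allowed to be $I_Z\otimes L$, the rest of your step degrades: ``an M-regular line bundle is ample'' no longer applies directly; $(\pi^*\mathcal{F}\otimes P)^{\otimes N}$ acquires torsion, so ``generically injective between torsion-free sheaves, hence injective'' fails for the multiplication map; and the claimed growth $h^0\sim N^{\dim A}$ would require controlling $h^0(I_Z^N\otimes L^N)$, which is not automatic. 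The standard repair avoids tensor powers entirely: M-regularity gives $h^0(A,\mathcal{F}\otimes Q)>0$ for \emph{every} $Q\in\mathrm{Pic}^0(A)$, hence effective divisors $D_Q\in |mbK_X+f^*(P^{-1}\otimes\pi^*Q)|$ for every torsion $Q$; if $\dim A>0$, two distinct torsion points $Q_1\neq Q_2$ yield, after clearing torsion by a multiple $M$, two distinct members $MD_{Q_1}\neq MD_{Q_2}$ of $|Mmb K_X|$ (they differ because $(\pi\circ f)^*$ is injective on $\mathrm{Pic}^0(A)$), contradicting $\kappa(X)=0$. With that substitution your argument closes: $\dim A=0$ forces $f_*\omega_X^{mb}\cong P$, a torsion hence numerically trivial line bundle.
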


\begin{example}
Let $X$ be a smooth projective variety of Kodaira dimension zero. Set $q(X):=h^1(O_X)$. If $q(X)=\dim X-1$, then $|12K_X|\neq\emptyset$. If $q(X)=\dim X-2$, then $|NK_X|\neq\emptyset$, where $N=lcm\{k\in\Nn\mid\varphi(k)\leq21\}$.
\end{example}

\begin{remark}
Note that the constant $m$ appearing in work on the effectivity of Iitaka fiber space by Birkar-Zhang \cite{birkar2016effectivity} also depends on $\beta_{\Tilde{F}}$ and $b$, where $F$ is the general fiber of the Iitaka fiber space, but in our setting, $F$ is the general fiber of the Albanese map.   
\end{remark}

\section{Parabolic fiber space
}

We say a fiber space $f:X\to Y$ is parabolic if its general fiber has Kodaira dimension zero. Given a parabolic fiber space, we shall denote by $F$ its general fiber, by $b$, $\tilde{F}$ and $N(\beta_{\tilde{F}})$ similarly as in the Introduction. We follow the notations of work on canonical bundle formula by Fujino and Mori \cite[Section 5]{Mori1985classification}, \cite{FujinoMori2000}.

\begin{lemma}\label{lemma}
Let $f:X\to Y$ be a parabolic fiber space. If the line bundle $(f_*\omega^{mb}_{X/Y})^{\vee}$ (where $^{\vee}$ denotes the reflexive hull) is numerically trivial for all $m\geq0$, then the sheaf $f_*\omega^b_{X/Y}$ is a line bundle, the $\Qq$-divisors $L_{X/Y}$, $L^{ss}_{X/Y}$ are integral and $O_Y(L_{X/Y})\cong O_Y(L^{ss}_{X/Y})\cong f_*\omega^b_{X/Y}$.
\begin{proof}
For all $m\geq0$, since $c_1(f_*\omega^{mb}_{X/Y})=0$, the canonical singular Hermitian metric on $f_*\omega^{mb}_{X/Y}$ is smooth and flat, and $f_*\omega^{mb}_{X/Y}$ is a line bundle \cite[4.6]{HaconPopaSchnell}. By considering the nonzero map $(f_*\omega^{b}_{X/Y})^m \to f_*\omega^{mb}_{X/Y}$, one has $(f_*\omega^{b}_{X/Y})^m\otimes O_Y(E)=f_*\omega^{mb}_{X/Y}$ for some effective divisor $E$. Since $(f_*\omega^{b}_{X/Y})^m$ and $f_*\omega^{mb}_{X/Y}$ are numerically trivial, $E$ must be zero. Hence we have $$f_*\omega^{mb}_{X/Y}=(f_*\omega^{b}_{X/Y})^m$$ for all $m\geq0$. Therefore, we have  $O_Y(L_{X/Y})=f_*\omega^b_{X/Y}$ \cite[Proposition 2.2]{FujinoMori2000}. The canonical bundle formula of $f$ has the form $$bK_{X/Y}=f^*L_{X/Y}+B,$$
where $B$ is an effective divisor satisfying $f_*O_X(mB)=O_Y$ for all $m\geq0$. Define $t_P:=\max\{t\in\Qq\mid(X,-\frac{1}{b}B+tf^*P)\text{ is sub log canonical near the generic point of P}\}$, where $P$ is a prime divisor on $Y$, and $L^{ss}_{X/Y}:=L_{X/Y}-b\sum_{P\subset Y}(1-t_P)P$ \cite[Definition 4.3]{FujinoMori2000}. Since $L^{ss}_{X/Y}$ is pseudo effective, one has $t_P=1$ for all $P$ by $L_{X/Y}\equiv0$ and \cite{BDPP}.
\end{proof}
\end{lemma}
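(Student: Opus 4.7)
The plan is to upgrade the hypothesis about reflexive hulls into a genuine multiplicativity of pluricanonical pushforwards, then translate this into integrality of $L_{X/Y}$ via Fujino-Mori's canonical bundle formula, and finally rule out any gap between $L_{X/Y}$ and $L^{ss}_{X/Y}$ by a pseudoeffectivity argument.

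First, I would show that each $f_*\omega^{mb}_{X/Y}$ is itself a numerically trivial line bundle, not merely reflexive of rank one. Since $\kappa(F)=0$ and $|bK_F|\neq\emptyset$, one has $h^0(F,mbK_F)=1$, so the sheaves have generic rank one; by the Hacon-Popa-Schnell canonical singular Hermitian metric on numerically trivial pushforwards, having $c_1=0$ forces the metric to be smooth and flat, whence local freeness of the underlying sheaf.

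Next, I would exploit the natural multiplication morphism $(f_*\omega^b_{X/Y})^{\otimes m}\to f_*\omega^{mb}_{X/Y}$. On the general fiber this sends the $m$-th tensor power of a generator of $H^0(F,bK_F)$ to a nonzero element of $H^0(F,mbK_F)$, so the map is nonzero. Since both source and target are line bundles on $Y$, the map factors as multiplication by the defining section of an effective Cartier divisor $E_m$; but both sides are numerically trivial, so $E_m\equiv 0$, and effectivity forces $E_m=0$. Hence $f_*\omega^{mb}_{X/Y}\cong(f_*\omega^b_{X/Y})^{\otimes m}$ for every $m\geq 0$. This is precisely the multiplicativity that Fujino-Mori's Proposition~2.2 uses to identify $O_Y(L_{X/Y})$ with $f_*\omega^b_{X/Y}$, in particular forcing $L_{X/Y}$ to be an integral divisor.

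For the semistable part, I would write $L^{ss}_{X/Y}=L_{X/Y}-b\sum_P(1-t_P)P$ with $0\leq t_P\leq 1$, so that the difference has nonpositive coefficients. Standard canonical-bundle-formula theory yields pseudoeffectivity of $L^{ss}_{X/Y}$; combined with $L_{X/Y}\equiv 0$ from the previous step, this makes the anti-effective divisor $-b\sum(1-t_P)P$ pseudoeffective, so BDPP forces all $t_P=1$ and hence $L^{ss}_{X/Y}=L_{X/Y}$. The main obstacle I anticipate is the multiplicativity step: one must rule out any effective-divisor discrepancy between $(f_*\omega^b_{X/Y})^{\otimes m}$ and $f_*\omega^{mb}_{X/Y}$, and this is exactly where the full strength of the hypothesis, namely numerical triviality for all $m$ simultaneously, is needed.
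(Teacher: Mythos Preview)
Your proposal is correct and follows essentially the same route as the paper: invoke the Hacon--Popa--Schnell metric argument to upgrade each $f_*\omega^{mb}_{X/Y}$ to a numerically trivial line bundle, use the multiplication map and numerical triviality to kill the effective discrepancy $E_m$, apply Fujino--Mori's Proposition~2.2 to identify $O_Y(L_{X/Y})\cong f_*\omega^b_{X/Y}$, and then use pseudoeffectivity of $L^{ss}_{X/Y}$ together with $L_{X/Y}\equiv 0$ and BDPP to force all $t_P=1$. The only cosmetic difference is that you make explicit the rank-one computation on the general fiber and the bounds $0\le t_P\le 1$, which the paper leaves implicit.
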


%

\begin{theorem}\label{thm:main 2}
Let $f:X\to Y$ be a parabolic fiber space. If the $\Qq$-divisor $L^{ss}_{X/Y}$ is numerically trivial, then the integral divisor $N(\beta_{\tilde{F}})L^{ss}_{X/Y}$ is linearly equivalent to zero. 
\end{theorem}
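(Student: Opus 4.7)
The plan is to identify $L^{ss}_{X/Y}$ with the rank-one Hodge line bundle $\tilde{f}_*\omega_{\tilde{X}/Y}$ associated to a Calabi--Yau family $\tilde{f}:\tilde{X}\to Y$, and then to exploit the integral structure on the associated variation of Hodge structure to force its monodromy character to take values in roots of unity whose orders divide $N(\beta_{\tilde{F}})$. To this end, starting from the canonical bundle formula $bK_{X/Y} = f^*L_{X/Y} + B$, I would perform the standard Fujino--Mori construction: take the $b$-fold cyclic cover branched along the unique divisor in $|bK_{X/Y}|$, log resolve, and apply semi-stable reduction in codimension one to produce a family $\tilde{f}:\tilde{X}\to Y$ whose general fiber is the Calabi--Yau resolution $\tilde{F}$ with $h^{n,0}(\tilde{F})=1$ (where $n := \dim\tilde{F}$). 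After a suitable finite base change, the Fujino--Mori/Kawamata theory identifies $\mathcal{O}_Y(L^{ss}_{X/Y})$ with the Hodge line bundle $\tilde{f}_*\omega_{\tilde{X}/Y}$, i.e., with the top piece $\mathcal{F}^n \subset R^n\tilde{f}_*\mathbb{C}\otimes\mathcal{O}_U$ of the Hodge filtration over the smooth locus $U\subseteq Y$. Since $H^{n,0}(\tilde{F})$ is annihilated by the Lefschetz operator for dimension reasons, this line sits inside the integral primitive local system $\mathbb{V} := R^n\tilde{f}_*\mathbb{Z}_{prim}|_U$ of rank $\beta_{\tilde{F}}$.

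Next, the assumption $L^{ss}_{X/Y}\equiv 0$ gives flatness. By Griffiths' curvature formula for the top Hodge bundle, vanishing of the first Chern form of $\mathcal{F}^n$ in the Hodge metric forces the Kodaira--Spencer map $\theta: \mathcal{F}^n \to (\mathcal{F}^{n-1}/\mathcal{F}^n)\otimes\Omega^1_U$ to vanish, so that $\mathcal{F}^n$ is preserved by the Gauss--Manin connection and becomes a flat rank-one sub-local-system of $\mathbb{V}\otimes\mathbb{C}$. Its monodromy is then a unitary character $\chi:\pi_1(U)\to U(1)\subset\mathbb{C}^*$. For each loop $g$, the value $\chi(g)$ is an eigenvalue of the integral monodromy $\rho(g)\in GL_{\beta_{\tilde{F}}}(\mathbb{Z})$, hence an algebraic integer on the unit circle, and therefore a root of unity by Kronecker's theorem. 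If $k_g$ denotes its order, the cyclotomic polynomial $\Phi_{k_g}$ divides the characteristic polynomial of $\rho(g)$ (an integer polynomial of degree $\beta_{\tilde{F}}$), so $\varphi(k_g)\leq\beta_{\tilde{F}}$ and $k_g\mid N(\beta_{\tilde{F}})$. Hence $\chi$ has order dividing $N(\beta_{\tilde{F}})$, giving $N(\beta_{\tilde{F}})L^{ss}_{X/Y}\sim 0$ on $U$. Since the local monodromies around the discriminant likewise have order dividing $N(\beta_{\tilde{F}})$ by the same argument, this linear equivalence extends to $Y$ via Deligne's canonical extension.

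The main obstacle will be the identification in the first step of $L^{ss}_{X/Y}$ with the Hodge line bundle $\tilde{f}_*\omega_{\tilde{X}/Y}$: the moduli part is defined via the canonical bundle formula and semi-stable reduction, and matching it cleanly with the top piece of the Hodge filtration on the primitive middle cohomology of $\tilde{F}$ requires careful bookkeeping of the birational modifications, $b$-th root covers, and canonical extensions across the discriminant locus (this is essentially the content of the works of Fujino, Kawamata, and Ambro on the canonical bundle formula). A secondary technical point is handling the base changes needed for semi-stable reduction; since these act trivially on individual fibers they do not change $\beta_{\tilde{F}}$ or enlarge the order of $\chi$, so the quantitative bound is preserved.
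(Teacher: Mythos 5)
Your plan follows essentially the same route as the paper: the Fujino--Mori covering trick producing a family whose general fiber $\tilde F$ has $p_g(\tilde F)=1$, semistable reduction, flatness of the top Hodge piece from numerical triviality (degree zero plus semipositivity of the Hodge/Griffiths curvature), a rank-one unitary character $\chi$ of the fundamental group of the smooth locus sitting inside the primitive middle local system, the bound $\varphi(k)\leq\beta_{\tilde F}$ forcing $\chi^{N(\beta_{\tilde F})}=1$, and finally extension across the discriminant via canonical extensions and Galois descent (the paper's Steps 4--5; note the paper also first cuts down to $\dim Y=1$, which you implicitly need both for the degree/curvature argument and for Kawamata's extension lemma).

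There is, however, a genuine gap at the crucial arithmetic step. You argue that $\chi(g)$ is an eigenvalue of the integral monodromy $\rho(g)$, lies on the unit circle because $\chi$ is unitary, and is therefore a root of unity ``by Kronecker's theorem.'' Kronecker's theorem requires that \emph{all} Galois conjugates of the algebraic integer have absolute value one, and that is exactly what you have not established: the global monodromy $\rho(g)$ is not quasi-unipotent in general, and an eigenvalue of an integer matrix that happens to lie on the unit circle need not be a root of unity (for instance, the unimodular conjugates of a Salem number occur as eigenvalues of the companion matrix of its minimal polynomial). Knowing $|\chi(g)|=1$ alone says nothing about the other roots of the characteristic polynomial. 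The missing input is precisely Deligne's argument (Hodge II, Corollary 4.2.8(iii)b, which is what the paper cites at this point): since the ambient local system is defined over $\Qq$ and underlies a polarizable variation of Hodge structure, every Galois conjugate of the rank-one subsystem is again a rank-one subsystem carrying a polarized sub-Hodge structure, hence is itself unitary; only then do all conjugates of $\chi(g)$ lie on the unit circle, Kronecker applies, and one also gets $[\Qq(\chi(g)):\Qq]\leq\beta_{\tilde F}$. With that result (or a reproduction of Deligne's semisimplicity argument) substituted for your Kronecker step, your proposal matches the paper's proof.
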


\begin{example}
Let $f:X\to Y$ be a minimal elliptic surface, then $O_Y(12L^{ss}_{X/Y})\cong j^*O_{\Pp}(1)$, where $j:Y\to\Pp^1$ is the $j$-function. If $L^{ss}_{X/Y}\equiv0$, then $j$ is constant, hence $O_Y(12L^{ss}_{X/Y})\cong O_Y$.    
\end{example}

\begin{proof}[Proof of Theorem \ref{thm:main 2}]
We follow closely the proof of canonical bundle formula \cite[Section 5]{Mori1985classification}, \cite{FujinoMori2000}. The argument of Step4 is due to Kawamata \cite{Kawamata1982curve}. 

Step0 (Reduce to the case $\dim Y=1$). Set $N=N(\beta_{\tilde{F}})$. By \cite[Theorem 3.1]{FujinoMori2000}, $NL^{ss}_{X/Y}$ is an integral divisor, to show that $O_Y(NL^{ss}_{X/Y})\cong O_Y$, it suffices to show that this holds outside a codimension $\geq2$ closed subset of $Y$. Hence one can assume that $\dim Y=1$ by replacing $Y$ by an intersection of general hyperplane sections $H_1\cap \cdots \cap H_{\dim Y-1}$ and $X$ by $f^*H_1\cap \cdots \cap f^*H_{\dim Y-1}$.

\begin{tikzcd}
W \arrow[r, "\pi"] \arrow[rd, "g"] & X \arrow[d, "f"] \\
& Y               
\end{tikzcd}

Step1 (Covering Trick)\cite[Remark 2.6]{FujinoMori2000}. Take $0\neq\phi\in\Cc(X)$, such that $bK_X=(\phi)+D$ and $D^h$ is effective, then $f_*O_X(mD^h)=O_Y$ for all $m\geq0$. Let $W$ be a resolution of the normalization of $X$ in $\Cc(X)(\phi^{1/b})$. Then $g:W\to Y$ is a fiber space whose general fiber is a resolution of the cover branched over the unique divisor in $|bK_F|$, and $\kappa(\tilde{F})=0$, $p_g(\tilde{F})=1$, $L^{ss}_{X/Y}=bL^{ss}_{W/Y}$ \cite[Lemma 3.4]{FujinoMori2000}.

\begin{tikzcd}
W \arrow[d, "g"] & W' \arrow[l, "q"] \arrow[d, "g'"] \\
Y                & Y' \arrow[l, "p"]                
\end{tikzcd}

Step2 (Semistable reduction). Let $p:Y'\to Y$ be a finite Galois semistable reduction, and $g'$ the induced fiber space. Let $Y_0$ and $Y'_0$ be the smooth locus of $g$ and $g'$ respectively. We may assume that $g^*(Y\setminus Y_0)$ and $g'^*(Y'\setminus Y'_0)$ are simple normal crossing. Since $p^*O_Y(NL^{ss}_{W/Y})=g'_*\omega^N_{W'/Y'}=(g'_*\omega_{W'/Y'})^N$, one has $\deg g'_*\omega_{W'/Y'}=0$. Hence the Hodge metric $h'$ on $g'_*\omega_{W'/Y'}|_{Y'_0}$ is flat. By $p^*(g_*\omega_{W/Y}|_{Y_0})=g'_*\omega_{W'/Y'}|_{Y'_0}$, the Hodge metric $h$ on $g_*\omega_{W/Y}|_{Y_0}$ is flat. 

Step3. Let $H_{\Cc}=(R^dg_{0*}\Cc_{W_0})_{prim}$ be the polarized variation of Hodge structure, where $d=\dim\tilde{F}$. Then  
$g_*\omega_{W/Y}|_{Y^0}$ and its flat Hodge metric define a local subsystem of $H_{\Cc}$ of rank one, which is given by a character, say, $\chi:\pi_1(Y_0)\to\Cc^*$. By Deligne \cite[Proof of Corollary 4.2.8(iii)b]{Delige1971Hodge2}, given any $\gamma\in \pi_1(Y_0)$, $\chi(\gamma)$ is a root of unity, and $[\Qq(\chi(\gamma)):\Qq]\leq\rk H_{\Cc}=:\beta_{\tilde{F}}$. Hence if $\chi(\gamma)$ is a $k$-th root of unity, one has $\varphi(k)\leq\beta_{\tilde{F}}$. Thus $\chi^N=1$, by recalling that $N:=lcm\{k\in\Nn\mid\varphi(k)\leq\beta_{\tilde{F}}\}$.  Consequently, one has $g_*\omega^N_{W/Y}|_{Y_0}=(g_*\omega_{W/Y}|_{Y_0})^N\cong O_{Y_0}$. 

Step4. By pulling back, one also has $(g'_*\omega_{W'/Y'}|_{Y'_0})^N\cong O_{Y'_0}$. Since $\deg g'_*\omega_{W'/Y'}=0$, $g'_*\omega_{W'/Y'}|_{Y'_0}$ has unipotent hence trivial local monodromies around $Y'\setminus Y'_0$ \cite[page 69]{Kawamata1982curve}. Thus $O_{Y'}\cong(\text{The canonical extension of }g'_*\omega_{W'/Y'}|_{Y'_0})^N\cong (g'_*\omega_{W'/Y'})^N$, where the second $\cong$ is due to \cite[Lemma 1]{Kawamata1982curve}. Note that by $p^*O_Y(NL^{ss}_{W/Y})=(g'_*\omega_{W'/Y'})^N$, we have $O_Y(NL^{ss}_{W/Y})$ is torsion.

Step5. Let $0\neq s\in H^0(g_*\omega^N_{W/Y}|_{Y_0})$ be a flat section. Then $p^*s\in H^0(g'_*\omega^N_{W'/Y
}|_{Y'_0})$ is flat. Since $|p^*s|^2_{h'^N}$ grows at most logarithmically along $Y_0\setminus Y'_0$ by \cite[Theorem 6.6]{Schmid1973singularities}, one deduce that $p^*s$ extends to a global section $\widetilde{p^*s}$ of $g'_*\omega^N_{W'/Y'}$ \cite[Lemma 1]{Kawamata1982curve}. Since $\widetilde{p^*s}$ is $Gal(Y'/Y)$-invariant, it descends to a nonzero global section of $O_Y(NL^{ss}_{W/Y})$. Hence $O_Y(NL^{ss}_{W/Y})\cong O_Y$, and $O_Y(NL^{ss}_{X/Y})\cong O_Y$. 
\end{proof}


\section*{Acknowledgement}
The author thanks his advisor, Zhan Li, for helpful discussions and encouragement, and Professors Florin Ambro, Osamu Fujino, and Juanyong Wang for answering questions. A grant from SUSTech supports the author.

\bibliographystyle{alpha}

\bibliography{bibfile}

\begin{thebibliography}{BDPP13}

\bibitem[BDPP13]{BDPP}
S\'ebastien Boucksom, Jean-Pierre Demailly, Mihai Păun, and Thomas Peternell.
\newblock The pseudo-effective cone of a compact {K}\"ahler manifold and varieties of negative {K}odaira dimension.
\newblock {\em J. Algebraic Geom.}, 22(2):201--248, 2013.

\bibitem[BZ16]{birkar2016effectivity}
Caucher Birkar and De-Qi Zhang.
\newblock Effectivity of {I}itaka fibrations and pluricanonical systems of polarized pairs.
\newblock {\em Publ. Math. Inst. Hautes \'{E}tudes Sci.}, 123:283--331, 2016.

\bibitem[CP17]{CapPaun2017Kodairadimension}
Junyan Cao and Mihai Păun.
\newblock Kodaira dimension of algebraic fiber spaces over abelian varieties.
\newblock {\em Invent. Math.}, 207(1):345--387, 2017.

\bibitem[Del71]{Delige1971Hodge2}
Pierre Deligne.
\newblock Th\'eorie de {H}odge. {II}.
\newblock {\em Inst. Hautes \'Etudes Sci. Publ. Math.}, (40):5--57, 1971.

\bibitem[FM00]{FujinoMori2000}
Osamu Fujino and Shigefumi Mori.
\newblock A canonical bundle formula.
\newblock {\em J. Differential Geom.}, 56(1):167--188, 2000.

\bibitem[HPS18]{HaconPopaSchnell}
Christopher Hacon, Mihnea Popa, and Christian Schnell.
\newblock Algebraic fiber spaces over abelian varieties: around a recent theorem by {C}ao and {P}\u aun.
\newblock In {\em Local and global methods in algebraic geometry}, volume 712 of {\em Contemp. Math.}, pages 143--195. Amer. Math. Soc., [Providence], RI, [2018] \copyright2018.

\bibitem[Kaw81]{kawamata1981characterization}
Yujiro Kawamata.
\newblock Characterization of abelian varieties.
\newblock {\em Compos. Math.}, 43(2):253--276, 1981.

\bibitem[Kaw82]{Kawamata1982curve}
Yujiro Kawamata.
\newblock Kodaira dimension of algebraic fiber spaces over curves.
\newblock {\em Invent. Math.}, 66(1):57--71, 1982.

\bibitem[Mor87]{Mori1985classification}
Shigefumi Mori.
\newblock Classification of higher-dimensional varieties.
\newblock In {\em Algebraic geometry, {B}owdoin, 1985 ({B}runswick, {M}aine, 1985)}, volume 46, Part 1 of {\em Proc. Sympos. Pure Math.}, pages 269--331. Amer. Math. Soc., Providence, RI, 1987.

\bibitem[Sch73]{Schmid1973singularities}
Wilfried Schmid.
\newblock Variation of {H}odge structure: the singularities of the period mapping.
\newblock {\em Invent. Math.}, 22:211--319, 1973.

\end{thebibliography}

\end{document}